\newtheorem{Thm}{Theorem}[section]
\newtheorem{Cor}[Thm]{Corollary}
\newtheorem{Lem}[Thm]{Lemma}
\newtheorem{Rem}[Thm]{Remark}
\numberwithin{equation}{section}
\begin{document}
\setlength{\baselineskip}{1.2\baselineskip}
\title[Neumann Problem]{Mean Curvature Flows of Graphs with Neumann Boundary condition  }

\author{Jinju Xu}
\address {Department of Mathematics\\
        Shanghai University \\
         Shanghai 200444 CHINA }
         \email{jjxujane@shu.edu.cn}

\thanks{2010 Mathematics Subject Classification: Primary 35B45; Secondary 35J92, 35B50}

\maketitle

\begin{abstract}
In this paper, we study the mean curvature flow of graphs with Neumann boundary condition. The main aim is to use the maximum principle
 to get the boundary gradient estimate for solutions. In particular,
 we obtain the corresponding existence theorem for the mean curvature flow of graphs.
\end{abstract}
{\bf Keywords:} Maximum principle, Mean curvature flow, Neumann problem, Gradient estimate
\section{Introduction}

In this note, we  consider the following   Neumann boundary value problem for  the  general mean curvature flow of graphs
\begin{align}
 u_t-\sum_{1\leq i,j\leq n}a^{ij}(Du)u_{ij} =&-f(x,u,Du)   \quad\text{in}\quad \Omega\times[0,\infty), \label{1.1}\\
 \frac{\partial u}{\partial \gamma} = &\psi(x, u)  \quad\text{on} \quad\partial\Omega\times[0,\infty),\quad u|_{t=0}=u_0\quad\text{in}\quad \Omega,\label{1.2}
\end{align}
 where $\Omega$ is a bounded domain in $\mathbb R^n $ with $C^3$ boundary $\partial \Omega $
 and  $\gamma$ is an inward unit normal vector to $\partial\Omega $; $Du=(\frac{\partial u}{\partial x_1},\cdots,\frac{\partial u}{\partial x_n})$ denotes the space gradient of $u$;
 $f: \overline\Omega\times\mathbf R\times\mathbf R^{n}\rightarrow \mathbf R$ and  $ \psi: \overline\Omega\times\mathbf R\rightarrow \mathbf R$
are given  functions,
and $u_0:\overline\Omega\rightarrow \mathbf R$, the initial value, is a smooth function and satisfies
\begin{align}
\frac{\partial u_0}{\partial \gamma} -\psi(x, u_0)=0\quad\text{on} \quad\partial\Omega.\label{1.3}
\end{align}
Here
$$a^{ij}(p)=\delta_{ij}-\frac{p_ip_j}{1+|p|^2},\quad \text{for}\quad p\in\mathbf R^n.$$

This problem \eqref{1.1}-\eqref{1.3} describes the evolution of graph $(u(x,t))$ by its mean curvature
in the direction of the unit normal vector  with Neumann boundary value condition.

This problem  has been  studied by many authors. For $f=0$ and $\psi=0$ in \eqref{1.1}-\eqref{1.3}, G.Huisken \cite{Huis89}  proved that  the solution remains smooth and bounded and asymptotically converges to a constant function.
 B.Andrews and  J.Clutterbuck \cite{AC} studied the problem for a convex domain  and  initial data $u_0\in C(\overline \Omega)$.
 For the general prescribed  contact angle boundary value condition
 \begin{align}
 \frac{\partial u}{\partial \gamma} = &\phi(x)\sqrt{1+|Du|^2}, \quad\text{on} \quad\partial\Omega\times[0,\infty)\label{1.4}
 \end{align}
 with the initial data $u_0$ satisfying
 \begin{align}
 \frac{\partial u_0}{\partial \gamma} = &\phi(x)\sqrt{1+|Du_0|^2} \quad\text{on} \quad\partial\Omega, \label{1.5}
 \end{align}
where  the prescribed  contact angle given by $\cos^{-1}\phi$  and $\phi\in C^{\infty}(\partial \Omega)$ with $|\phi|\le \phi_0<1$, $\phi_0$ is a constant.
  Altschuler and Wu \cite{AW94} proved that for $n=2$, if $\Omega$ is strictly convex
 and $|D\phi|<\min \kappa(\partial\Omega)$, where $\kappa$ is the curvature of $\partial\Omega$, then the solution of \eqref{1.1}($f=0$),\eqref{1.4} with \eqref{1.5}
 either converges to a minimal surface (when $\int_{\partial\Omega}\phi=0$),
 or behaves like moving by a vertical translation as $t$ approaches infinity. For $\forall n\ge 2$
 and for general smooth $\Omega$ and  $\phi$, Guan \cite{GB96} proved
 that there exists a smooth  solution of \eqref{1.1},\eqref{1.4}-\eqref{1.5}. The author can  give a new proof of \eqref{1.1},\eqref{1.4}-\eqref{1.5}
 for general case by the method in \cite{Xu}.

  In this note, we consider the mean curvature flows for general  Neumann problem \eqref{1.1}-\eqref{1.3}. To study the behavior of  graphs,
 it is important to derive the gradient bound for solution of \eqref{1.1}-\eqref{1.3}.
As the interior gradient estimate for \eqref{1.1} has been studied by Ecker and Huisken in \cite{EH89}, a key point is to derive
 the boundary gradient estimate in this note.

Now let's state our main result.
\begin{Thm}\label{Thm1.1}
Assume that $\Omega \subset \mathbb R^n $ is a bounded $C^3$ domain, $n\geq 2$. Let  $\gamma$ be the inward unit normal vector to $\partial\Omega $.
Suppose $u\in C^{2}(\overline\Omega)\bigcap C^{3}(\Omega)\times [0,T]$ for any fixed $T>0$ is a bounded  solution of \eqref{1.1}-\eqref{1.3} with $|u|\leq M_0$,
where $M_0>0$ is a constant.
We assume $f(x,z,p) \in C^{1}(\overline\Omega\times [-M_0, M_0]\times\mathbb R^n)$
and $\psi(x,z) \in C^{3}(\overline\Omega\times [-M_0, M_0])$
satisfying the following conditions
\begin{align}
f_z(x,z,p)\geq &0,
\label{1.7}\\
\frac{|f_{x}|}{|p|}+\sum_{1\leq j\leq n}|f_{p_j}|+|f-\sum_{1\leq j\leq n}f_{p_j}p_j|\leq & o(\log |p|),\quad
\text{as} \,\, |p|\rightarrow\infty,
\label{1.8}
\end{align}
and for some positive constant $L$
\begin{align}
|\psi(x,z)|_{C^3(\overline\Omega\times[-M_0, M_0])}\leq&  L.\label{1.10}
\end{align}
Then there exists a small positive constant
$\mu_0$ such that we have the following boundary gradient estimate for any fixed $T>0$,
$$\sup_{\overline\Omega_{\mu_0}\times [0,T]}|Du|\leq \max\{M_1, M_2\},$$
where $M_1$ is a positive constant depending only on $n, \mu_0, M_0$, which is from the interior gradient estimates;
$M_2$ is  a positive constant depending only on $n, \Omega, \mu_0, M_0, L$, and $d(x) =\texttt{dist}(x, \partial\Omega), \Omega_{\mu_0} = \{x \in \Omega: d(x)<\mu_0\}.$
\end{Thm}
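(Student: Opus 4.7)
The plan is to set up a maximum-principle argument for an auxiliary function defined on the tubular strip $\overline\Omega_{\mu_0}\times[0,T]$. After shrinking $\mu_0$ so that $d(x)=\mathrm{dist}(x,\partial\Omega)\in C^3(\overline\Omega_{\mu_0})$, I would extend the inward unit normal field smoothly as $\gamma(x)=-\nabla d(x)$. Writing $v=\sqrt{1+|Du|^2}$, $w=\log v$, introduce the Neumann defect
\[
\varphi(x,t):=\sum_{i=1}^n\gamma^i(x)u_i(x,t)-\psi(x,u(x,t)),
\]
which vanishes on $\partial\Omega\times[0,T]$ by \eqref{1.2}. The auxiliary function would be of the form
\[
G(x,t):=w(x,t)+\alpha\,\varphi(x,t)+K\bigl(1-e^{-\beta d(x)}\bigr),
\]
where the parameters are fixed in the order $\beta\sim 1/\mu_0$, then $\alpha$ large, then $K$ very large (all depending on $n,\Omega,L,M_0,\mu_0$). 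The goal is to bound $G$ at its maximiser $(\hat x,\hat t)\in\overline\Omega_{\mu_0}\times[0,T]$.

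Next I would eliminate the easy cases. If $\hat t=0$ the estimate is controlled by the initial data; if $d(\hat x)=\mu_0$, the interior gradient estimate of Ecker--Huisken \cite{EH89} yields $|Du(\hat x,\hat t)|\le M_1$, and the maximum property of $G$ transfers this bound to the whole strip. For the boundary case $\hat x\in\partial\Omega$, since $\gamma$ is inward, one has $\partial_\gamma G(\hat x,\hat t)\le 0$. Differentiating $\varphi$ in the normal direction on $\partial\Omega$ produces the Hessian entry $u_{\gamma\gamma}$ together with tangential second derivatives of $u$; the tangential pieces are exchanged for the second fundamental form of $\partial\Omega$ and derivatives of $\psi$ (controlled by $L$ via \eqref{1.10}) by differentiating the identity $u_\gamma=\psi(x,u)$ in tangential directions. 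Using $\partial_\gamma d=1$, the contribution $K\beta$ from the distance term can be chosen large enough to dominate $\partial_\gamma w$ and the remaining lower-order pieces, producing $\partial_\gamma G(\hat x,\hat t)>0$ — a contradiction.

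For the remaining interior case $0<d(\hat x)<\mu_0$, the linearised parabolic operator $\mathcal{L}:=\partial_t-a^{ij}(Du)D_{ij}$ satisfies $\mathcal{L}G\ge 0$ and $DG=0$ at $(\hat x,\hat t)$. Differentiating \eqref{1.1} and using the standard Bochner-type identity for $\mathcal{L}w$ produces a good negative principal term of the shape $-c_0|D^2u|^2/v^4$, plus lower-order contributions from $f$, from $\alpha\mathcal{L}\varphi$, and from the distance term. Hypothesis \eqref{1.8} ensures that the $f$-contributions are of order $o(\log v)$, while the gradient relation $DG=0$ lets me substitute $Dw$ by $-\alpha D\varphi-K\beta e^{-\beta d}\nabla d$, producing $D^2u$-quadratic pieces that are absorbed via Cauchy--Schwarz into the good term. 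The resulting inequality then forces $w(\hat x,\hat t)\le M_2$, completing the estimate.

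The hard part will be the boundary computation: the tangential-tangential second derivatives $u_{\tau\tau}(\hat x)$ are not controlled a priori, so the identities obtained by double tangential differentiation of $u_\gamma=\psi$ must be arranged so that the curvature-of-$\partial\Omega$ terms combine with the $K\beta$ contribution with a definite sign. This must be balanced against the interior step, where the same $\alpha$ has to keep $\alpha\mathcal{L}\varphi$ absorbable under only the $o(\log|p|)$ growth permitted by \eqref{1.8}; it is this coupling that dictates the hierarchy $1\ll\alpha\ll K$ and fixes the final constant $M_2$.
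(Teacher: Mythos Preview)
Your barrier is built around the Neumann defect $\varphi=u_\gamma-\psi$, and this is where the boundary step breaks. At a boundary maximiser you compute $\partial_\gamma G=\partial_\gamma\log v+\alpha\,\partial_\gamma\varphi+K\beta$, and as you note, $\partial_\gamma\varphi$ contains the entry $u_{\gamma\gamma}$. You then claim that $K\beta$ can be chosen to dominate ``the remaining lower-order pieces'', but $\alpha\,u_{\gamma\gamma}$ is not lower order: nothing in the hypotheses bounds $u_{\gamma\gamma}$ on $\partial\Omega$, and at a boundary maximum there is no second-order information in the normal direction. If $u_{\gamma\gamma}(\hat x,\hat t)$ is large and negative, $\partial_\gamma G$ is as negative as you like regardless of $K\beta$, and no contradiction arises. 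The tangential differentiation of $u_\gamma=\psi$ only controls the mixed entries $u_{\gamma\tau}$; it says nothing about $u_{\gamma\gamma}$. (You also cannot recover $u_{\gamma\gamma}$ from the equation $a^{ij}u_{ij}=u_t+f$ here, since Theorem~\ref{Thm1.1} does not assume $\psi_u\ge 0$ and hence gives no bound on $u_t$, and the tangential Hessian of $G$ at the maximum involves third derivatives of $u$ rather than $u_{\tau\tau}$.)

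The paper sidesteps this entirely by a different choice of test quantity: instead of adding a defect term, it replaces $u$ by $w=u-\psi(x,u)\,d$ and works with $\log\log|Dw|^2+h(u)+\alpha_0 d$. The point is that $w_\gamma=u_\gamma-\psi=0$ identically on $\partial\Omega$, so $(|Dw|^2)_\gamma=(|D'w|^2)_\gamma$ involves only tangential second derivatives of $u$, which \emph{are} controlled by differentiating the Neumann condition tangentially; no $u_{\gamma\gamma}$ ever appears, and the Hopf contradiction goes through with a fixed $\alpha_0$. A second structural difference is the double logarithm: the paper's auxiliary function is $\log\log|Dw|^2$, not $\log v$. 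In the interior case this produces an extra gain of a factor $\log|Dw|$ in the final inequality, which is exactly what is needed to absorb the $o(\log|p|)$ growth allowed by \eqref{1.8}; with a single logarithm and a large multiplier $\alpha$ in front of $\mathcal L\varphi$ (which itself generates $D^2u$-quadratic terms of size $\alpha$ from $a^{ij}_{p_l}u_{l\gamma}u_{ij}$), the absorption you sketch is not clearly available. Finally, a minor sign slip: for $d=\mathrm{dist}(\cdot,\partial\Omega)$ the inward normal is $\gamma=+\nabla d$, not $-\nabla d$.
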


\begin{Rem}\label{Rem1.2}
In particular, for $f=\tilde{f}(x,u)\sqrt{1+|Du|^2}$,  Theorem~\ref{Thm1.1}  holds only if $\tilde{f}_z\ge 0$,
and $|\tilde{f}|+|\tilde{f}_x|$ is bounded.
\end{Rem}

 As we stated before, there is a standard interior gradient estimates for the mean curvature flow.
\begin{Rem}[\cite{EH89}]\label{Rem1.1}
Suppose for any fixed $T>0$, $u\in C^{3}(\Omega)\times[0,T]$ is a bounded solution for the equation \eqref{1.1}  with  $|u|\leq M_0$,
and if $f \in C^{1}(\overline\Omega \times [-M_0, M_0]\times\mathbb R^n)$ satisfies the condition \eqref{1.7}-\eqref{1.8}, then for any subdomain
$\Omega'\subset\subset\Omega$, we have
\begin{align}\sup_{\Omega'\times[0,T]}|Du|\leq M_1, \label{1.11a}\end{align}
where $M_1$ is a positive constant depending only on $n,  M_0, \texttt{dist} (\Omega', \partial\Omega)$.
\end{Rem}

 By the standard theory, we obtain the long time  existence of solutions.
\begin{Thm}\label{Thm1.2}
Assume that $\Omega \subset \mathbb R^n $ is a bounded  $C^{3}$ domain, $n\geq 2$. Let  $\gamma$ be the inward unit normal vector to $\partial\Omega $.
Under the conditions \eqref{1.7}-\eqref{1.10} and $\psi_u\ge 0$,
then for some $\alpha\in (0,1)$ there exists a  unique $ C^{2,\alpha}(\overline\Omega\times[0,\infty))$ solution of  \eqref{1.1}-\eqref{1.3}.
\end{Thm}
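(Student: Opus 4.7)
The plan is to follow the standard continuity method: establish short-time existence, prove uniform a priori $C^{2,\alpha}$ estimates on any finite time interval, and then continue the solution to $[0,\infty)$. Uniqueness will follow by subtracting two solutions and applying the maximum principle to the resulting linear uniformly parabolic equation with oblique boundary condition, which is legitimate once a gradient bound is known.

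Short-time existence is classical. I would linearize \eqref{1.1}--\eqref{1.2} around $u_0$, apply Schauder theory for the linear oblique derivative problem, and close via a Banach fixed-point argument on a small time interval to obtain a $C^{2,\alpha}(\overline\Omega\times[0,T_*])$ solution on a maximal existence interval $[0,T^*)$. The compatibility condition \eqref{1.3} is what makes this step work up to $t=0$ without losing spatial regularity.

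The next step is a uniform $C^0$ bound. This is where the hypotheses $f_z\geq 0$ and $\psi_u\geq 0$ enter decisively: they allow a maximum principle argument on an auxiliary function of the form $\pm u - (A d(x) + B)$ with $d(x)=\mathrm{dist}(x,\partial\Omega)$, where $A$ and $B$ are chosen so that the Neumann condition produces the correct sign at $\partial\Omega$ (using $\psi_u\geq 0$ and $|\psi|\leq L$) and the differential inequality has the correct sign inside $\Omega$ (using $f_z\geq 0$). This yields $|u|\leq M_0$ uniformly in $t$. With $M_0$ in hand, Remark~\ref{Rem1.1} gives an interior gradient bound and Theorem~\ref{Thm1.1} gives a boundary gradient bound on $\Omega_{\mu_0}$; together these produce $|Du|\leq M$ on $\overline\Omega\times[0,T^*)$.

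Once $|Du|$ is controlled, the matrix $a^{ij}(Du)$ is uniformly elliptic with bounded continuous coefficients and the Neumann condition is uniformly oblique, so standard Hölder estimates for quasilinear parabolic oblique derivative problems yield a uniform $C^{1,\alpha}$ bound. Schauder estimates applied to the resulting linear equation then upgrade this to a uniform $C^{2,\alpha}(\overline\Omega\times[0,T^*))$ bound independent of $T^*$, which precludes blow-up of the higher-order norms and lets the short-time result continue the solution past $T^*$; hence $T^*=\infty$. The main obstacle in the whole program is the boundary gradient estimate of Theorem~\ref{Thm1.1}, which is precisely the novel contribution; once that is available the remaining ingredients — $C^0$ bound, interior gradient bound, and parabolic regularity — are standard, which is why the author only invokes ``the standard theory.''
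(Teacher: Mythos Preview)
Your overall strategy---short-time existence, a priori estimates on finite intervals, continuation, and uniqueness via the comparison principle---matches the paper's, which simply says ``by the standard theory'' once the gradient estimate is in hand. The one substantive gap is your $C^0$ step.

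The barrier $\pm u - (A\,d(x)+B)$ cannot produce a bound on $|u|$ that is \emph{uniform in time} under hypotheses \eqref{1.7}--\eqref{1.10} alone. Take $f\equiv -1$ and $\psi\equiv 0$: then $f_z=0\ge 0$, $\psi_u=0\ge 0$, conditions \eqref{1.7}--\eqref{1.10} hold, yet $u(x,t)=c+t$ (with $c$ constant) solves \eqref{1.1}--\eqref{1.3} and is unbounded. More generally, $f_z\ge 0$ gives no sign information on $f$ itself, so at a spatial maximum of $u$ the equation reads $u_t = a^{ij}u_{ij} - f \le -f(x,u,0)$, which need not be nonpositive. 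The monotonicity $\psi_u\ge 0$ likewise does not force a barrier inequality at the boundary in the direction you need. So the sentence ``This yields $|u|\le M_0$ uniformly in $t$'' and the later claim of $C^{2,\alpha}$ bounds ``independent of $T^*$'' are not justified.

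The paper's substitute is Lemma~\ref{Lem2.1}: differentiate the equation in $t$, use $f_u\ge 0$ and $\psi_u\ge 0$ with Hopf's lemma to get $\sup_{\overline\Omega\times[0,\infty)}|u_t|=\sup_{\overline\Omega}|u_t(\cdot,0)|=:C_1$, whence $|u(\cdot,t)-u_0|\le C_1 t$. This gives only a \emph{$T$-dependent} bound $|u|\le M_0(T)$, and consequently the gradient bound \eqref{1.12} and all higher estimates depend on $T$ as well. That is still enough for the continuation argument: if $T^*<\infty$, the estimates on $[0,T^*)$ are uniform (they depend on $T^*$, which is finite), so the solution extends past $T^*$, a contradiction. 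Once you replace your barrier paragraph with this $|u_t|$ argument, the rest of your outline is correct and aligns with the paper.
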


The rest of the paper is organized as follows. In section 2, we  will give the definitions and some notations,
 and   derive the following a prior estimates for the solution
\begin{align}
 |u_t|\le C_1,\quad M_T\equiv\max_{\overline\Omega\times[0,T]}|u(x,t)-u_0(x)| \le C_1T.\label{1.11}
 \end{align}
 And in section 3, we give the  boundary gradient estimate
 \begin{align}
 |Du|\le C_2e^{C_3M_T}\quad\text{in}\quad \overline\Omega_{\mu_0}\times[0,T],\label{1.12}
\end{align}
where $\mu_0$ is a fixed small positive number to be determined later and $ C_1, C_2,  C_3$ are uniform constants independent of $T$.
Then we finish the proof of  the main Theorem~\ref{Thm1.1} and obtain the long time existence of solutions. At last, we give an application.

\section{$|u_t|$- estimates}
We denote by $\Omega$ a bounded  domain in $\mathbb{R}^n$, $n\geq 2$,  $\partial \Omega\in C^{3}$,   set
\begin{align*}
 d(x)=\texttt{dist}(x,\partial \Omega),
 \end{align*}
 and
\begin{align*}
 \Omega_\mu=&\{{x\in\Omega:d(x)<\mu}\}.
 \end{align*}
Then it is well known that there exists a positive constant $\mu_{1}>0$ such that $d(x) \in C^3(\overline \Omega_{\mu_{1}})$.
 As in Simon-Spruck \cite{SS76} or Ma-Xu \cite{MX14},  we can take $\gamma= D d$ in $\Omega_{\mu_{1}}$ and note that
   $\gamma$ is a $C^2(\overline \Omega_{\mu_{1}})$ vector field. We also have the following formulas
\begin{align}\label{2.1}
|D\gamma|+|D^2\gamma|\leq& C(n,\Omega) \quad\text{in}\quad \Omega_{\mu_{1}},\\
 \sum_{1\leq i\leq n}\gamma^iD_j\gamma^i=0,  \quad\sum_{1\leq i\leq n}\gamma^iD_i\gamma^j=&0, \quad|\gamma|=1 \quad\text{in} \quad\Omega_{\mu_{1}}.
\end{align}
As in \cite{MX14}, we define
 \begin{align}\label{2.2}
c^{ij}=&\delta_{ij}-\gamma^i\gamma^j  \quad \text{in} \quad \Omega_{\mu_{1}},
\end{align}
 and for a vector $\zeta \in R^n$, we write $\zeta'$ for the vector with $i-$th component $ \sum_{1\leq j\leq n}c^{ij}\zeta_j$. So
 \begin{align}\label{2.3}
|D'u|^2=& \sum_{1\leq i,j\leq n}c^{ij}u_iu_j.
\end{align}
Setting
\begin{align*}
v=(1+|D u|^2)^{\frac{1}{2}},\quad a^{ij}=a^{ij}(D u),
\end{align*}
then\begin{align}
a^{ij}=\delta_{ij}-\frac{u_iu_j}{v^2},\quad \sum_{1\leq i,j\leq n}a^{ij}u_iu_j=1-\frac{1}{v^2}\label{2.4}
\end{align}

Now we first establish  \eqref{1.11} and then \eqref{1.12} in the next section.
\begin{Lem}\label{Lem2.1}
Assume that $f_u\ge 0$ and $\psi_u\ge0$.
Then we obtain the estimate
\begin{align*}
\max_{\overline\Omega\times[0,\infty)}|u_t|=\max_{\overline\Omega}|u_t(\cdot,0)|.
\end{align*}

\end{Lem}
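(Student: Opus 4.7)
The plan is to linearize at $w := u_t$ and apply the parabolic maximum principle together with Hopf's boundary point lemma; the hypotheses $f_u \ge 0$ and $\psi_u \ge 0$ are exactly what is needed for these tools to work in the required direction. First I would differentiate the interior equation \eqref{1.1} in $t$ to obtain a linear parabolic PDE for $w$ of the form
\begin{align*}
w_t - a^{ij}(Du)\,w_{ij} - b^k\, w_k + f_u(x,u,Du)\, w = 0,
\end{align*}
where $b^k = \tfrac{\partial a^{ij}}{\partial p_k}(Du)\, u_{ij} - f_{p_k}(x,u,Du)$, the crucial feature being that the zeroth-order coefficient $f_u$ is nonnegative. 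Differentiating the Neumann condition \eqref{1.2} in $t$ yields the linear oblique boundary condition
\begin{align*}
\partial_\gamma w - \psi_u(x,u)\, w = 0 \quad \text{on} \quad \partial\Omega \times [0, \infty),
\end{align*}
whose coefficient (with respect to the outward normal $-\gamma$) also has the sign needed for the maximum principle since $\psi_u \ge 0$.

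Next I would argue by contradiction that $M := \max_{\overline\Omega \times [0,T]} w > \max_{\overline\Omega} w(\cdot,0)$ is impossible. If the maximum is attained at an interior point $(x_0,t_0) \in \Omega \times (0,T]$, the strong interior maximum principle, applied to $w$ when $M \ge 0$ and to $-w$ when $M < 0$ so that the relevant extremum is nonnegative, propagates $w \equiv M$ back to $t=0$, contradicting $M > \max w(\cdot,0)$. If the maximum is attained at a boundary point $(x_0,t_0) \in \partial\Omega \times (0,T]$, the parabolic Hopf boundary point lemma (again applied to $w$ or $-w$ depending on the sign of $M$) yields a strict inequality of the form $\mathrm{sgn}(M)\,\partial_\gamma w(x_0,t_0) < 0$, while the boundary identity $\partial_\gamma w = \psi_u M$ together with $\psi_u \ge 0$ gives the opposite sign, a contradiction.

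The symmetric argument applied to $-w$ (using again $f_u \ge 0$ and $\psi_u \ge 0$) gives $\min w \ge \min_{\overline\Omega} w(\cdot,0)$; combining the two bounds yields the stated equality for each $T > 0$, and since the bound is uniform in $T$ it extends to $[0,\infty)$.

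The main obstacle, such as it is, is the sign bookkeeping: the strong maximum principle and the Hopf lemma for equations with nonnegative zeroth-order coefficient apply cleanly only to nonnegative maxima (and nonpositive minima), so one has to run the argument separately on $w$ and on $-w$ to cover both a positive supremum and a negative infimum of $u_t$. A prerequisite worth checking is that $u$ is sufficiently regular for the $t$-differentiated equation to be a classical linear parabolic PDE with bounded coefficients and for Hopf's lemma to apply up to $\partial\Omega$; this is ensured by the standing regularity hypothesis $u \in C^2(\overline\Omega) \cap C^3(\Omega) \times [0,T]$ on the solution.
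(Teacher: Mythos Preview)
Your approach is essentially the same as the paper's: differentiate the equation and the boundary condition in $t$ to obtain a linear parabolic problem for $u_t$ with nonnegative zeroth-order coefficient $f_u$ and boundary relation $(u_t)_\gamma=\psi_u\,u_t$, then rule out a new interior or boundary extremum for $t>0$ via the maximum principle and the Hopf lemma. The paper only spells out the case of a positive maximum (leaving the symmetric case implicit), while you are a bit more explicit about treating $w$ and $-w$ separately; otherwise the arguments coincide.
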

\begin{proof} It suffices to prove the following: For any fixed $T>0$, if $u_t$ admits a positive
local maximum at some point $(x_0,t_0)\in\overline\Omega\times[0,T]$, that is
\begin{align*}
u_t(x_0,t_0)= \max_{\overline\Omega\times[0,T]}u_t\ge 0,
\end{align*}
then $t_0=0$. Now suppose $t_0>0$.
It is easy to calculate that $u_t$ satisfies the equation
\begin{align}
\frac{d}{dt}u_{t}=& \sum_{1\leq i,j\leq n}a^{ij}(u_t)_{ij}
-\frac{1}{2u_t}\sum_{1\leq i,j\leq n}a^{ij}(u_t)_{i}(u_t)_{j}-\sum_{1\leq j\leq n}f_{p_j}(u_t)_{j}\notag\\
&-\frac{2}{v}\sum_{1\leq i,j\leq n}a^{ij}v_i(u_t)_{j}-f_uu_t.\label{2.5}
\end{align}
 Then from $f_u\ge 0$, we obtain that $u_t$ satisfies the parabolic maximum principle. Hence $x_0\in\partial\Omega$.

On the other hand, we differentiate the Neumann boundary condition along the normal vector, and get from $\psi_u\ge0$
\begin{align}\label{2.6}
(u_t)_{\gamma}=& \psi_uu_t\ge0.
\end{align}
But this contradicts the Hopf Lemma at $(x_0,t_0)$.
\end{proof}
By Lemma~\ref{Lem2.1}, it follows that
\begin{align}
\max_{\overline\Omega\times[0,T]}|u(x,t)-u_0(x)| \le C_1T,
\end{align}
where $C_1$ is a constant independent of $T$.
\section{ $|Du|$-estimates}
Now we begin to prove Theorem~\ref{Thm1.1}.  We follow the technique in  Ma-Xu\cite{MX14}.
Setting $w=u-\psi(x,u)d$,  we choose the following auxiliary function
$$\Phi(x,t)=\log|Dw|^2e^{1+M_0+u}e^{\alpha_0 d}, \quad  (x,t) \in \overline \Omega_{\mu_{0}}\times[0,T],
\quad 0<\mu_0<\mu_1$$ where $\alpha_0=|\psi|_{C^0(\overline\Omega\times[-M_0, M_0])}+C_0+1 $, $C_0$ is
a positive constant depending only on $n,\Omega$.

For simplification, we let
\begin{align}\label{3.0}
 \varphi(x,t)= \log \Phi(x,t) =\log\log|Dw|^2+h(u)+g(d),
 \end{align}
 where
\begin{align}\label{3.1}
 h(u)=1+M_0+u,\quad g(d)=\alpha_0 d.
 \end{align}

Suppose that
$\varphi(x,t)$ attains its maximum at $(x_0,t_0)\in \overline \Omega_{\mu_{0}}\times[0,T]$, where $0<\mu_0<\mu_1$
is a sufficiently small number which  shall be decided   later. Now we divide three cases to prove Theorem~\ref{Thm1.1}.

{\bf Case 1.}  $x_0 \in \partial\Omega$,
 then we shall  get the bound of $|Du|(x_0,t_0)$.

{\bf Case 2.}  $x_0 \in\partial\Omega_{\mu_0}\bigcap\Omega$, then we shall get the estimates via the  interior gradient bound in Remark~\ref{Rem1.1}.

{\bf Case 3.}  $x_0\in \Omega_{\mu_0}$, in this case for the sufficiently small constant $\mu_0>0$,  then we can use the maximum principle to get the bound of $|Du|(x_0,t_0)$.

\begin{proof}{\bf Case 1.} $x_0 \in \partial\Omega$.
We differentiate $\varphi$ along the normal direction.
\begin{align}\label{5.2}
\frac{\partial\varphi}{\partial\gamma}=&\frac{\sum_{1\leq i\leq n}(|Dw|^2)_i\gamma^i}{|Dw|^2\log|Dw|^2}+h'u_{\gamma}+g'.
\end{align}
Since
\begin{align}
w_i=&u_i-\psi_u u_i d -\psi_{x_i} d-\psi\gamma^i,\label{5.3}\end{align}
and
\begin{align}
|Dw|^2=&|D'w|^2+w^2_\gamma,\label{5.4}
\end{align}
we have
\begin{align}
w_\gamma=&u_\gamma-\psi_uu_\gamma d-\psi_{x_i}\gamma^i d-\psi=0\quad\text{on}\quad\partial\Omega,\label{5.5}
\end{align}
and
 \begin{align}
(|Dw|^2)_i=&(|D'w|^2)_i\quad\text{on}\quad\partial\Omega.\label{5.6}
\end{align}
Applying  \eqref{2.1}, \eqref{2.3} and \eqref{5.6}, it follows that
\begin{align}\label{5.7}
(|Dw|^2)_i\gamma^i=&(|D'w|^2)_i\gamma^i
=2\sum_{1\leq k,l\leq n}c^{kl}w_{ki}w_l\gamma^i
=2\sum_{1\leq k,l\leq n}c^{kl}u_{ki}u_l\gamma^i-2\sum_{1\leq k,l\leq n}c^{kl}u_lD_k\psi,
\end{align}
where
\begin{align*}
D_k\psi= \psi_{x_k} + \psi_u u_k.
\end{align*}
Differentiating the Neumann boundary value condition in \eqref{1.2} with respect to tangential direction,   we have
\begin{align}\label{5.8}
\sum_{1\leq k\leq n}c^{kl}(u_{\gamma})_k=&\sum_{1\leq k\leq n}c^{kl}D_k\psi.
\end{align}
It follows that
\begin{align}\label{5.9}
\sum_{1\leq k,i\leq n}c^{kl}u_{ik}\gamma^i=&-\sum_{1\leq k,i\leq n}c^{kl}u_i(\gamma^i)_k+\sum_{1\leq k\leq n}c^{kl}D_k\psi.
\end{align}
Inserting \eqref{5.9} into \eqref{5.7} and combining \eqref{1.2}, \eqref{5.2}, we have
\begin{align}\label{5.10}
|Dw|^2\log|Dw|^2\frac{\partial\varphi}{\partial\gamma}(x_0,t_0)
=&(g'(0)+h'\psi)|Dw|^2\log|Dw|^2-2\sum_{1\leq i,k,l\leq n}c^{kl}u_iu_l(\gamma^i)_k.
\end{align}
From \eqref{5.3}, we obtain
\begin{align}\label{5.11}
|Dw|^2=&|Du|^2-\psi^2\quad\text{on}\quad\partial\Omega.
\end{align}
Assume $|Du|(x_0,t_0)\ge \sqrt{100+3|\psi|^2_{C^0(\overline\Omega\times[-M_0, M_0])}}$, otherwise we get the estimates. At $(x_0,t_0)$,   we have
\begin{align}
\frac{1}{2}|Du|^2\leq&|Dw|^2\leq |Du|^2,\quad |Dw|^2\ge 50.\label{5.12}
\end{align}
Inserting  \eqref{5.12}  into \eqref{5.10}, we have
\begin{align}\label{5.14}
\frac{\partial\varphi}{\partial\gamma}(x_0,t_0)
\geq&\alpha_0-|\psi|_{C^0(\overline\Omega\times[-M_0, M_0])}-\frac{\sum_{1\leq i,k,l\leq n}|2c^{kl}u_iu_l(\gamma^i)_k|}{|Dw|^2\log|Dw|^2}\notag\\
\geq&\alpha_0-|\psi|_{C^0(\overline\Omega\times[-M_0, M_0])}-C_0\notag\\
>&0.
\end{align}
On the other hand, we have
$$\frac{\partial\varphi}{\partial\gamma}(x_0,t_0)\leq 0,$$
it is a contradiction to \eqref{5.14}.

Then we have
\begin{align}\label{5.15}
|Du|(x_0,t_0)\leq\sqrt{100+2|\psi|^2_{C^0(\overline\Omega\times[-M_0, M_0])}}.
\end{align}
\end{proof}

{\bf Case 2.} $x_0 \in\partial\Omega_{\mu_0}\bigcap\Omega$.
This is due to interior gradient estimates. From Remark~\ref{Rem1.1}, we have
 \begin{align}\label{5.16}
\sup_{\partial\Omega_{\mu_0}\bigcap\Omega\times[0,T]}|Du|\leq \tilde{M}_1,
\end{align}
where $\tilde{M}_1$ is a positive constant depending only on $n, M_0, \mu_0, L_1$.

{\bf Case 3.} $x_0\in \Omega_{\mu_0}$.
In this case, $x_0$ is a critical point of $\varphi$.
 We choose the normal coordinate at $x_0$, by rotating the coordinate system
suitably, we may assume that $w_i(x_0,t_0)=0,\,2\leq i\leq n$ and $w_1(x_0,t_0)=|Dw|(x_0,t_0)>0$.
 And we can further assume that the matrix $(w_{ij}(x_0,t_0))(2\leq i,j\leq n)$ is diagonal.
 We denote that  the choice of coordinate has been used in Wang \cite{Wang98}  and is slightly different from  Ma-Xu\cite{MX14}.
 The calculation in this paper is more simple than that in \cite{MX14}.

Setting $$\mu_2 \le \frac{1}{10
0L_2}$$ such that
 \begin{align}\label{5.16a}
 |\psi_u| \mu_2 \le \frac{1}{100
}, \quad \text{then}\quad \frac{99}{100}\le 1-\psi_u \mu_2 \le
 \frac{101}{100}.
\end{align}
We can choose $$ \mu_0=\frac{1}{2} \min\{ \mu_1, \mu_2,  1 \}.$$
Since we have let
\begin{align*}
w=&u-G,\quad G=\psi(x,u)d,
\end{align*}
then we have
\begin{align}
w_k=(1-G_u)u_k-G_{x_k}.\label{3wk}
\end{align}
Since at $(x_0,t_0)$,
\begin{align}
w_1=&(1-G_u)u_1-G_{x_1}=(1-G_u)u_1 -\psi_{x_1}d-\psi\gamma^1,\label{3wi2a}\\
0=w_i=&(1-G_u)u_i-G_{x_i}=(1-G_u)u_i-\psi_{x_i}d-\psi\gamma^i,\quad i=2,\ldots,n.\label{3wi2}
\end{align}
So from the above relation,  at $(x_0,t_0)$, we can assume
\begin{align}
u_1 \ge 200(1+ |\psi|_{C^1(
\overline\Omega\times[-M_0, M_0])}),\label{3wk}
\end{align}
then
\begin{align}\label{5.18}
\frac{19}{20}u_1\leq w_1\leq \frac{21}{20}u_1, \quad \frac{91}{100}w_1^2\leq |Du|^2\leq \frac{111}{100}w_1^2,
\end{align}
and by
 the choice of $\mu_0$ and  \eqref{5.16a}, we have
\begin{align}
\frac{99}{100}
\le 1-G_u \le\frac {101}{100}.
\label{wi2aa}
\end{align}
From the above choices, we shall separate two steps to derive the  estimate of $|Du|(x_0,t_0)$. As we mentioned before, all the calculations will be done at the fixed point $(x_0,t_0)$.

{\bf Step 1 :} We first get the formula \eqref{3I2c}.

Taking the first  derivatives of $\varphi$,
\begin{align}\label{3varphit}
\varphi_t=&\frac{(|Dw|^2)_t}{|Dw|^2\log|Dw|^2}+h'u_t,
\end{align}
\begin{align}\label{3varphii}
\varphi_i=&\frac{(|Dw|^2)_i}{|Dw|^2\log|Dw|^2}+h'u_i+g'\gamma^i.
\end{align}
From $\varphi_i(x_0,t_0)=0$,  we have
\begin{align}\label{3varphii=0}
(|Dw|^2)_i=-|Dw|^2\log|Dw|^2(h'u_i+g'\gamma^i)=2w_1^2\log w_1 (h'u_i+g'\gamma^i).
\end{align}
Take the derivatives again for  $\varphi_i$,
\begin{align}\label{3varphiija}
\varphi_{ij}
=&\frac{(|Dw|^2)_{ij}}{|Dw|^2\log|Dw|^2}-(1+\log|Dw|^2)\frac{(|Dw|^2)_i(|Dw|^2)_j}{(|Dw|^2\log|Dw|^2)^2}\notag\\&
+h'u_{ij}
+h''u_iu_j+g''\gamma^i\gamma^j+g'(\gamma^i)_j.
\end{align}
Using \eqref{3varphii=0}, it follows that
\begin{align}\label{3varphiijb}
\varphi_{ij}
=&\frac{(|Dw|^2)_{ij}}{|Dw|^2\log|Dw|^2}+h'u_{ij}-(1+\log|Dw|^2)h'^2u_iu_j\notag\\&
-(1+\log|Dw|^2)g'^2\gamma^i\gamma^j
-(1+\log|Dw|^2)h'g'(\gamma^iu_j+\gamma^ju_i)+g'(\gamma^i)_j.
\end{align}
Then we get
\begin{align}\label{3aijvarphiija}
0\geq \sum_{1\leq i,j\leq n}a^{ij}\varphi_{ij}-\varphi_t
=:&I_1+I_2,
\end{align}
where
\begin{align}\label{3I1a}
I_1=\frac{1}{|Dw|^2\log|Dw|^2}\big[\sum_{1\leq i,j\leq n}a^{ij}(|Dw|^2)_{ij}-(|Dw|^2)_t\big],
\end{align}
and
\begin{align}
I_2=&\sum_{1\leq i,j\leq n}a^{ij}\bigg\{h' u_{ij}-(1+\log|Dw|^2)h'^2u_iu_j-(1+\log|Dw|^2)g'^2\gamma^i\gamma^j\notag\\
&\qquad\qquad\quad-2(1+\log|Dw|^2)h'g'\gamma^iu_j+g'(\gamma^i)_j\bigg\}-h'u_t.\label{3I2a}
\end{align}

Now we first treat $I_2$.
From the choice of the coordinate and the equations \eqref{1.1}, \eqref{3.1}, we have
\begin{align}
I_2
\ge&f-2(1+\alpha_0^2\sum_{1\leq i,j\leq n}a^{ij}\gamma^i\gamma^j)\log w_1-C_4.\label{3I2c}
\end{align}

{\bf Step 2 :}
 We calculate $I_1$ and get the formula \eqref{3I1c}. Then we finish the proof of gradient estimate \eqref{1.12}.

Taking the first  derivatives of $|Dw|^2$, we have
\begin{align}\label{3|Dw|^2t}
(|Dw|^2)_t=&2w_1w_{1t},
\end{align}
\begin{align}\label{3|Dw|^2i}
(|Dw|^2)_i=&2w_1w_{1i}.
\end{align}
Taking the derivatives of $(|Dw|^2)_i$ with respect to $x_j$, we have
\begin{align}\label{3|Dw|^2ij}
(|Dw|^2)_{ij}=&2w_1w_{1ij}+2w_{ki}w_{kj}.
\end{align}
By \eqref{3I1a} and \eqref{3|Dw|^2ij}, we can rewrite $I_1$ as
\begin{align}
I_1
=&\frac{1}{w_1\log w_1}\big[\sum_{1\leq i,j\leq n}a^{ij}w_{ij1}-w_{1t}\big]+\frac{1}{w_1^2\log w_1}\sum_{1\leq i,j,k\leq n}a^{ij}w_{ki}w_{kj}.\label{3I1b}
\end{align}
In the following, we shall deal with $I_{1}$. As we have let
\begin{align}
w=&u-G,\quad G=\psi(x,u)d,\label{3w}
\end{align}
then we have
\begin{align}
w_1=&(1-G_u)u_1-G_{x_1},\notag\\
w_{1i}=&(1-G_u)u_{1i}-G_{uu}u_1u_i-G_{ux_i}u_1-G_{ux_1}u_i-G_{x_1x_i},\label{3wki}\\
w_{1ij}=&(1-G_u)u_{1ij}-G_{uu}(u_{1i}u_j+u_{1j}u_i+u_{ij}u_1)\notag\\
&-G_{ux_i}u_{1j}-G_{ux_j}u_{1i}-G_{ux_1}u_{ij}\notag\\
&-G_{uuu}u_1u_iu_j-G_{uux_i}u_ju_1-G_{uux_j}u_iu_1-G_{uux_1}u_iu_j\notag\\
&-G_{ux_ix_j}u_1-G_{ux_1x_j}u_i-G_{ux_ix_1}u_j-G_{x_ix_jx_1}.\label{3wkij}
\end{align}
So from the choice of the coordinate and the equation \eqref{1.1},  we have
\begin{align}
\sum_{1\leq i,j\leq n}a^{ij}w_{ij1}-w_{1t}
\geq&(1-G_u)[\sum_{1\leq i,j\leq n}a^{ij}u_{ij1}-u_{1t}]-(G_{uu}u_1+G_{ux_1})f\notag\\&
-2G_{uu}\sum_{1\leq i,j\leq n}a^{ij}u_{1i}u_j-2\sum_{1\leq i,j\leq n}a^{ij}G_{ux_i}u_{1j}-C_5u_1.\label{3wkaijwkij}
\end{align}
Differentiating \eqref{1.1}, we have
\begin{align}\label{3aijuijka}
\sum_{1\leq i,j\leq n}a^{ij}u_{ij1}-u_{t1}=&-\sum_{1\leq i,j,l\leq n}a^{ij}_{p_l}u_{l1}u_{ij}+D_{1}f.
\end{align}
and
\begin{align}\label{3aijpl}
a^{ij}_{p_l}=&\frac{2u_iu_ju_l}{v^4}-\frac{\delta_{il}u_j+\delta_{jl}u_i}{v^2}.
\end{align}
From \eqref{3aijpl}, we have
\begin{align}\label{3aijuijkb}
\sum_{1\leq i,j\leq n}a^{ij}u_{ij1}-u_{t1}
=&\frac{2}{v^2}\sum_{1\leq i,j,l\leq n}a^{il}u_{l1}u_{ij}u_{j}+D_{1}f.
\end{align}
Inserting \eqref{3aijuijkb} into \eqref{3wkaijwkij} and \eqref{3I1b}, we have
\begin{align}
I_1
\geq&\frac{1}{w_1^2\log w_1}\big[\sum_{1\leq i,j,k\leq n}a^{ij}w_{ki}w_{kj}+\frac{2(1-G_u)}{v^2}w_1\sum_{1\leq i,j,l\leq n}a^{il}u_{l1}u_{ij}u_{j}\notag\\&
-2w_1\sum_{1\leq i,j\leq n}a^{ij}(G_{uu}u_{i}+G_{ux_i})u_{1j}+(1-G_u)w_1D_{1}f\notag\\&
-(G_{uu}u_1+G_{ux_1})w_1f-C_7u_1^2\big].\label{3I1c}
\end{align}
Next we shall treat the second derivative terms in \eqref{3I1c}, using the first order derivative condition
 $\varphi_i(x_0,t_0)=0.$
 By \eqref{3varphii=0}  and  \eqref{3|Dw|^2i},  we have
\begin{align}\label{3wkwki=1}
w_{1i}
=&-w_1\log w_1(h'u_i+g'\gamma^i),\qquad i=1,2,\ldots,n.
\end{align}
Putting \eqref{3wki} into \eqref{3wkwki=1},  by the choice of coordinate, we get
\begin{align}\label{3wkuki=1}
(1-G_u)u_{1i}
=&-w_1\log w_1(h'u_i+g'\gamma^i)+(G_{uu}u_1+ G_{ux_1})u_i+ (u_1G_{ux_i}+ G_{x_1x_i}),\\
&\hspace{200pt} i=1,2,\ldots,n.\notag
\end{align}
It follows that
\begin{align}\label{uij}
 &\sum_{1\leq i,j,k\leq n}a^{ij}w_{ki}w_{kj}+\frac{2(1-G_u)}{v^2}w_1\sum_{1\leq i,j,l\leq n}a^{il}u_{l1}u_{ij}u_{j}
-2w_1\sum_{1\leq i,j\leq n}a^{ij}(G_{uu}u_{i}+G_{ux_i})u_{1j}\notag\\
&\geq3(1+\alpha_0^2\sum_{1\leq i,j\leq n}a^{ij}\gamma^i\gamma^j)w_1^2\log^2 w_1-C_8w_1^2\log w_1,
\end{align}
and
\begin{align}\label{3Dkf}
(1-G_u)D_1f=&(1-G_u)f_{x_1}+(1-G_u)f_uu_1+(1-G_u)\sum_{1\leq j\leq n}f_{p_j}u_{j1}\notag\\
=&(1-G_u)f_{x_1}+(1-G_u)f_uu_1-h'w_1\log w_1\sum_{1\leq j\leq n}f_{p_j}u_j\notag\\&+(G_{uu}u_1+ G_{ux_1})\sum_{1\leq j\leq n}f_{p_j}u_j
-g'w_1\log w_1\sum_{1\leq j\leq n}f_{p_j}\gamma^j\notag\\&+ \sum_{1\leq j\leq n}f_{p_j}(u_1G_{ux_j}+ G_{x_1x_j}),
\end{align}
Inserting \eqref{uij} and \eqref{3Dkf} into \eqref{3I1c},  and combining \eqref{3I1c}, we get
\begin{align}\label{3aijvarphiijc}
0\geq& \sum_{1\leq i,j\leq n}a^{ij}\varphi_{ij}-\varphi_t
\geq\frac{1}{4}\log w_1-C_9.
\end{align}
Since $\varphi(x,t)\le \varphi(x_0,t_0)$,  we get the gradient estimate \eqref{1.12} and thus complete the proof of Theorem~\ref{Thm1.1}.\qed

Consequently, equation \eqref{1.1} is uniformly parabolic in $\Omega\times [0,T)$ for any fixed $T>0$. By the standard theory,
we obtain the long time existence of solutions. That is Theorem~\ref{Thm1.2}.

Thus Huisken's result in \cite{Huis89} can be as an application. We have the following corollary.
\begin{Cor}\label{Cor1.1}
Assume that $\Omega \subset \mathbb R^n $ is a bounded  $C^{2,\alpha}$ domain for some $\alpha\in (0,1)$, $n\geq 2$. Let  $\gamma$ be the inward unit normal to $\partial\Omega $.
Problem  \eqref{1.1}-\eqref{1.3} for $f=0$ and $\psi=0$ admits a   solution $ u\in C^{2,\alpha}(\overline\Omega\times[0,\infty))$
satisfying the estimates \eqref{1.11a},\eqref{1.11},\eqref{1.12} and asymptotically converges to a constant function as $t\rightarrow\infty$.
\end{Cor}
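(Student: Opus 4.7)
The plan is to take existence from Theorem~\ref{Thm1.2} essentially for free, then exploit that the area functional is a Lyapunov function for~\eqref{1.1} to force $u_t\to 0$, identify any subsequential limit as a constant, and finally pin down a single limit. With $f\equiv 0$ and $\psi\equiv 0$ every hypothesis of Theorem~\ref{Thm1.2} is automatic: $f_z=0$, the growth condition~\eqref{1.8} is vacuous, $\psi$ and its derivatives vanish, $\psi_u=0$. Theorem~\ref{Thm1.2} thus supplies a solution $u\in C^{2,\alpha}(\overline\Omega\times[0,\infty))$. The a~priori bound $|u|\le M_0:=\|u_0\|_{L^\infty}$ follows by comparing with the constants $\max u_0$ and $\min u_0$ (both are stationary solutions, and the Neumann condition makes Hopf's lemma applicable). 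Because the boundary gradient bound~\eqref{1.12} depends on $T$ only through $M_T$, and now $M_T\le 2M_0$ uniformly in $T$, one obtains a time-independent bound on $|Du|$; Theorem~\ref{Thm1.1}, Remark~\ref{Rem1.1} and Lemma~\ref{Lem2.1} then give the three asserted estimates.

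Next I would observe that $A(t):=\int_\Omega\sqrt{1+|Du|^2}\,dx$ is a Lyapunov function for the flow. Differentiating $A$, integrating by parts, and using $\partial_\gamma u=0$ together with the identity $u_t/v=\operatorname{div}(Du/v)$ yields
\[
A'(t)=-\int_\Omega \frac{u_t^2}{v}\,dx.
\]
Since $A(t)\ge|\Omega|$ and $v$ is uniformly bounded above from Step~1, this gives $\int_0^\infty\|u_t(\cdot,t)\|_{L^2(\Omega)}^2\,dt<\infty$. The gradient bound also renders~\eqref{1.1} uniformly parabolic, so parabolic Schauder estimates supply a uniform spatial $C^\alpha$ modulus for $u_t$; combined with the monotonicity of $\|u_t(\cdot,t)\|_{L^\infty}$ from Lemma~\ref{Lem2.1}, the space-time $L^2$ control rules out any positive limit, so $\|u_t(\cdot,t)\|_{L^\infty(\Omega)}\to 0$. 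The same Schauder theory furnishes uniform $C^{2,\beta}(\overline\Omega)$ bounds for $u(\cdot,t)$, so along any $t_k\to\infty$ an Arzel\`a--Ascoli subsequence converges to some $u_\infty$ in $C^{2,\beta'}$; since $u_t\to 0$, the limit solves $\operatorname{div}(Du_\infty/v_\infty)=0$ in $\Omega$ with $\partial_\gamma u_\infty=0$ on $\partial\Omega$, and testing against $u_\infty$ and integrating by parts (the boundary term vanishes by Neumann) gives $\int_\Omega|Du_\infty|^2/v_\infty\,dx=0$. Hence $u_\infty$ is constant.

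The genuinely non-routine step is upgrading this subsequential convergence to convergence of the \emph{full} orbit to a \emph{single} constant, since the area functional assigns the value $|\Omega|$ to every constant and therefore does not resolve the one-parameter family of stationary solutions. I would handle this by invoking the exponential-decay argument of Huisken~\cite{Huis89}: once the uniform gradient bound is in hand, analysis of the evolution equations for $v$ and for $u_t$---both linear parabolic equations on $\Omega$ with Neumann boundary data, uniformly parabolic thanks to the gradient bound---produces $\|u_t(\cdot,t)\|_{L^\infty}\le C e^{-\lambda t}$, whence $\int_0^\infty\|u_t(\cdot,t)\|_{L^\infty}\,dt<\infty$ and the entire orbit converges uniformly to a uniquely determined constant.
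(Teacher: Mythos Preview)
Your proposal is correct and in fact considerably more explicit than the paper's own treatment: the paper offers no argument for Corollary~\ref{Cor1.1} beyond stating it as a direct application of Huisken's result~\cite{Huis89} once the a~priori estimates of Theorem~\ref{Thm1.1} and the existence from Theorem~\ref{Thm1.2} are available. Your outline spells out the uniform-in-time $C^0$ bound, the resulting time-independent gradient bound via~\eqref{1.12}, the Lyapunov role of the area functional, and the identification of subsequential limits as constants, and for the final passage to convergence of the full orbit you defer to the exponential-decay mechanism of~\cite{Huis89}---which is precisely what the paper does as well.
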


  {\bf Acknowledgment.}
The author would like to
thank Professor Xinan Ma for  helpful discussions and encouragement.


\begin{thebibliography}{14}
\bibitem {AW94} Altschuler S.J. and Wu L.F., {\em Translating surfaces of the non-parametric mean curvature flow
with prescribed contact angle}, Calc. Var. 2(1994), 101-111.

\bibitem {AC} Ben Andrews and Julie Clutterbuck, {\em Time-interior gradient estimates for quasilinear parabolic
equations}, Indiana Univ. Math. J., 58(1):351¨C380, 2009.


\bibitem {EH89} Ecker.K and Huisken., {\em Interior curvature estimates for hypersurfaces of prescribed mean curvature}, Ann.Inst.H.Poincare' Anal.Non Lin'eaire 6 (1989), 251-260.






\bibitem{GB96} Guan B., {\em Mean curvature motion of non-parametric  hypersurfaces with contact angle condition}, In Elliptic and Parabolic methods in Geometry, A K Peters, Wellesley(MA), (1996), 47-56.

\bibitem{Huis89} Huisken G., {\em Non-parametric mean curvature evolution with boundary conditions }, J.Differential Equations, 77(1989), 369-378.







%
%



\bibitem{MX14} Ma X.N. and  Xu J.J., {\em Gradient estimates of mean curvature equations with Neumann boundary condition.} Adv.Math. 290(2016), 1010-1039.



\bibitem{SS76}Simon L. and Spruck J., {\em Existence and regularity of a capillary surface with prescribed contact angle}, Arch. Rational Mech. Anal. 61(1976), 19-34.

\bibitem{Sp75}Spruck J., {\em On the existence of a capillary surface with prescribed contact angle}, Comm. Pure Appl. Math. 28(1975), 189-200.



\bibitem{Ur73}Ural'tseva N., {\em Solvability of the capillary problem}, Vestnik Leningrad. Univ. No. 19(1973), 54-64, No. 1(1975), 143-149[Russian]. English Translation in vestnik Leningrad Univ.math. 6(1979), 363-375, 8(1980), 151-158.

\bibitem{Wang98} Wang X.-J., {\em Interior gradient estimates for mean curvature equations}, Math.Z. 228(1998), 73-81.

\bibitem{Xu} Xu Jinju, {\em A new proof of gradient estimates for  mean curvature equations with  oblique boundary conditions}, arxiv 1411.5790.
\end{thebibliography}
\end{document}